\newcolumntype{d}{D{.}{.}{3.2}}
\newcolumntype{s}{D{.}{.}{6.2}}
\newcolumntype{z}{D{.}{.}{4.2}}
\newcolumntype{x}{D{.}{.}{2.2}}
\newtheorem{theorem}{Theorem}
\newtheorem{thm}[theorem]{Theorem}
\newtheorem{lemma}[theorem]{Lemma}
\newtheorem{definition}[theorem]{Definition}
        {\hspace*{\fill}$\Box$\par\vspace{4mm}}
\newenvironment{proof}{\noindent{\em Proof.}}%
        {\hspace*{\fill}$\Box$\par\vspace{4mm}}
\newcommand{\qed}{\hfill$\Box$\par\smallskip\noindent}
\title{A new upper bound for the multiple knapsack  problem
}
\author{Paolo Detti \thanks{Dipartimento di
Ingegneria dell'Informazione e Scienze Matematiche, Universit\`a di Siena, Via Roma 56, 53100 Italy, e-mail: detti@dii.unisi.it, tel.: +39 0577-235892, fax: +39 0577-233602}}
\begin{document}
\maketitle
\begin{abstract}
In this paper, a new upper bound for the Multiple Knapsack Problem (MKP) is proposed, based on the idea of relaxing MKP to a  {\em  Bounded Sequential  Multiple Knapsack Problem}, i.e., a multiple knapsack problem in which item sizes are divisible. Such a relaxation, called  sequential relaxation, is obtained by suitably replacing the items of a MKP instance with items with divisible sizes. Experimental results on benchmark instances show that the upper bound is effective when the ratio between the number of items and the number of knapsacks is small. 

\noindent{\bf Keywords:} multiple knapsack problem, sequential relaxation, upper bound, divisible sizes.
\end{abstract}

\section{Introduction}\label{sec:intro}

Given a  set of $n$ items with weigths $w_1,\ldots,w_n$
and profits $p_1,\ldots,p_n$, and a set of $m$ knapsacks  with capacities $c_1,\ldots,c_m$, the 0--1 Multiple Knapsack Problem (MKP) problem consists in  packing items into the knapsacks, in such way that the total weight of the items assigned to a knapsack does not exceed its capacity. The objective is to maximize the total profit of the assigned items.
Let $x_{ij}$ be equal to 1 if item $j$ is assigned to knapsack $i$, an ILP formulation for MKP reads as 
{
\begin{align}
&\max \sum\limits_{i=1}^{m}\sum\limits_{j=1}^{n} p_jx_{ij}\label{ilp:obj}\\
&\sum\limits_{j=1}^{n}w_jx_{ij}  \leq c_i \text{ for } i=1,\ldots,m\label{c1}\\
 &\sum\limits_{i=1}^{m} x_{ij} \leq 1 \text{ for } j=1,\ldots, n  \label{c2}\\
&x_{ij} \in \{0,1\}  \text{ for } i=1,\ldots,m \quad j=1,\ldots, n \label{ilp:bin}
\end{align}
}
The objective function \eqref{ilp:obj} accounts for the maximization of the total profit.
Constraints \eqref{c1}  limit  the total weight of items
assigned to each knapsack. Constraints \eqref{c2}  state that an item can be assigned at most to one knapsack.


In this paper, a new upper bound for MKP is proposed, based on the idea of relaxing the problem to a  {\em  Bounded Sequential  Multiple Knapsack Problem} (BSMKP) \cite{DettiIPL}, i.e., a multiple knapsack problem in which item sizes are divisible. Such a relaxation, called {\em sequential relaxation} in what follows, is obtained by suitably replacing the items of an MKP instance with items with divisible sizes. 
In BSMKP, multiple copies may exist of each item. Hence, items can be partitioned into classes, each class containing items with the same profit and weight. BSMKP can be polynomially solved in $O(\bar n^2 + \bar nm)$ time \cite{DettiIPL}, where $\bar n$ is the number of item classes (the complexity reduces to $O(\bar n\log \bar n + \bar nm)$ when a single copy of each item exists).
We prove that the  upper bound provided by the sequential relaxation  is always not  worse than the linear relaxation of model  \eqref{ilp:obj} --\eqref{ilp:bin}. Computational results on benchmark instances from the literature show that, in comparison with a classical upper bound for MKP  \cite{MT1981, Pis99}, the sequential upper bound is particularly effective when the ratio $n/m$ is  small, i.e., $n/m\le 3$. 

The paper is organized as follows. Section \ref{sec:lit} reports results from the literature. 
In Section \ref{sec:sequb}, the sequential relaxation is formally defined and described. In Section \ref{sec:transf}, a procedure for generating a series of different sequential relaxations is proposed. In Section \ref{sec:res},  computational experiments on benchmark instances are presented. Finally, conclusions follow.

\section{Literature results}\label{sec:lit}
 
MKP is a strongly NP-hard problem intensively studied in the literature. For reviews on MKP and its variants, we refer the reader to the books by Martello and Toth \cite{MT1990} and Kellerer {\em et al.} \cite{Kellerer}.   
Effective exact algorithms for MKP include the bound and bound method proposed in \cite{MT1981}, called MTM, turned out to be computationally much faster than the previous approaches proposed in the literature.
Pisinger in  \cite{Pis99} derived from MTM a more efficient exact procedure, called MULKNAP, capable of solving to optimality large-size instances with up to 100,000 items and 5 or 10 knapsacks. However, none of the algorithms were able to solve instances with small values of $n/m$.
MTM and  MULKNAP employ upper bound computations obtained through the {\em surrogate relaxation} \cite{MT1981} of the capacity constraints.

Recent contributions to MKP have been presented by Chekuri and Khanna \cite{Chekuri}, Fukunaga and Korf  \cite{Fukunaga07}, Fukunaga  \cite{Fukunaga11}, Jansen \cite{Jansen}, Lalami {\em et al.}  \cite{Lalami}, and Balbal {\em et al.}  \cite{Balbal}. In   \cite{Fukunaga07},  a branch-and-bound algorithm is proposed, based on a bin-oriented branching structure and a  dominance criterion. The algorithm turned out to be effective  for relatively small $n/m$ ratios (i.e., $n/m \sim 4$). More recently, Fukunaga \cite{Fukunaga11}  proposes a solution approach for
MKP, (extending the one proposed in \cite{Fukunaga07}, based on the integration of path-symmetry and path-dominance criteria and bound-and-bound techniques \cite{MT1981, Pis99}. The solver  appears to be effective on instances with high $n/m$ ratios. Dell'Amico {\em et al.} \cite{Dell'Amico} propose two new pseudo-polynomial formulations, and  an exact effective method based on the hybrid combination of several techniques, called Hy-MKP. On benchmark instances, Hy-MKP attains very good performances, failing  on some instances with  $ n/m = 3$ and on few instances with ratios 4 and 5.

As the computational experiments show, the sequential relaxation  proposed in this paper, based on solving a BSMKP problem, turns out to be effective when $n/m$ is small, i.e.,   $n/m\le 3$. 
 
 BSMKP has been addressed in the literature in \cite{DettiIPL, DettiIPoly}.  The single knapsack version of BSMKP is known in the literature as  {\em
sequential knapsack problem} (SKP). For the unbounded SKP (i.e., the problem in which an infinite number of copies exists for each item), Marcotte proposes a linear time algorithm \cite{Marcotte}, and Pochet and Wolsey \cite{WolPoc} provide  an explicit polytope description.
 For the bounded SKP, Verhaegh and Aarts present an
$O(n^2\log n)$ algorithm  \cite{ver_aarts},  Hartmann and
Olmstead \cite{HartOlm} propose an $O(n\log n +
\sum\limits_{j=1}^{n}\log b_j)$ algorithm,  where $b_j$ is the number of copies of item $j$, 
and Pochet and  Weismantel \cite{Pochet98onthe} provide a polytope description.\\
 In \cite{DettiIPL}, Detti proposed a polynomial  $O( n^2 +  nm)$ algorithm  for BSMKP. The complexity of the algorithm reduces to $O(n\log n + nm)$ when  a single copy exists for each item. Hence, for SKP, the algorithm presented in \cite{DettiIPL} requires $O(n^2)$ steps for the bounded case  and $O(n\log n)$ steps, when a single copy exists of each item (the same complexity of the algorithm proposed
in \cite{HartOlm}). A complete description of the BSMKP polytope  is presented  in \cite{DettiIPoly}.


\section{The Sequential relaxation}\label{sec:sequb}

The new proposed upper bound is based on relaxing MKP to a Bounded Sequential  Multiple Knapsack Problem. 
 BSMKP can be formally stated as follows. There are a set of items partitioned into
$\bar n$ different classes and a set of $\bar m$ knapsacks.
Each item of class $t$ has a size $s_t \in \mathcal{Z}^+$, a profit
$v_t \in \mathcal{Z}^+$ and an upper bound $b_t \in
\mathcal{Z}^+$, for $t=1,\ldots, \bar n$. Item sizes are divisible, i.e., $s_{t+1}$ is divisible by $s_t$, for $t=1,\ldots, \bar n-1$. 
Each knapsack $i$ has a capacity $\bar c_i \in
\mathcal{Z}^+$. The problem is to find
the number $y_{it}$ of items of class $t$, to be assigned to each knapsack $i$, in such a way that the total profit is maximized. A formulation of BSMKP reads as follows:
{
\begin{align}
&\max \sum\limits_{i=1}^{\bar m}\sum\limits_{t=1}^{\bar n}v_ty_{it}\label{seq:obj}\\
&\sum\limits_{t=1}^{\bar n}s_ty_{it}\leq \bar c_i \text{ for } i=1,\ldots,m\label{seq:c1}\\
 &\sum\limits_{i=1}^{\bar m}y_{it}\leq b_t\text{ for } t=1,\ldots, \bar n  \label{seq:c2}\\
&y_{i,t} \in \mathcal{Z}^+ \label{seq:int}
\end{align}
}
The objective function  \eqref{seq:obj} accounts for the maximization of the total profit. Constraints  \eqref{seq:c1} state that the total size of items
assigned to a knapsack does not exceed its capacity.
Constraints  \eqref{seq:c2} impose that the total number of the assigned  items of class $t$ does not exceed  the upper bound.

\medskip\medskip
Given an instance of MKP, we call {\em sequential relaxation}  the transformation of the MKP instance into an instance of BSMKP.  The sequential relaxation is formally defined in the following.
\begin{definition}
The {\em sequential relaxation}  of an instance $I$ of MKP is an instance   $I_s$ of BSMKP obtained by replacing each item $j$ in $I$ with a set of items $S_j=\{j_{1},\ldots, j_{K_j}\}$ with positive sizes $s_{j_1}\le s_{j_2}\le \ldots \le s_{j_{K_j}}$ and profits $v_{j_1}, v_{j_2} \ldots  v_{j_{K_j}}$, respectively, such that:
\begin{align}
& \sum\limits_{k \in S_j} s_{j_k}= w_j \text{ for } k=1,\ldots,K_j  \label{eq:seq}\\
&  v_{j_k}= {p_j \over w_j} s_{j_k} \text{ for }j =1,\ldots,n, k=1, \ldots,K_j \label{eq:seq1}\\
 &   \text{ the sizes } s_{j_k}  \text{ are divisible, } \text{ for } j =1,\ldots,n, k=1,\ldots,K_j-1.\label{eq:seq2}
\end{align}
All items in $I_s$ with the same size and profit belong to the same class.
The knapsacks in $I_s$ are the same of $I$, i.e.,  $\bar m=m$ and $\bar c_i=c_i$, for $i=1,\ldots,m$.
\end{definition}
Note that, Conditions \eqref{eq:seq} and \eqref{eq:seq1} imply that 
$$ \sum\limits_{k \in S_j} v_{j_k}= p_j  \text{ for } k=1,\ldots,K_j.$$
As shown in \cite{DettiIPL}, $I_s$  can be optimally solved in  $O(\bar n^2 + \bar n m)$ time. 
Many different sequential relaxations  may exist, depending on the set of divisible sizes $s_{j_k}$,  for $j =1,\ldots,n$ and $k=1,\ldots,K_j$, used for generating $I_s$. In fact, if weights $w_j$ and capacities $c_i$ are integers, the sequential relaxation is equivalent to the linear relaxation of model model \eqref{ilp:obj}--\eqref{ilp:bin} when $s_{j_k}=1$,  for $j =1,\ldots,n$ and $k=1,\ldots,K_j$ is set (i.e., each item in $I$ is split into smaller items of size 1).

As an example, let $I$ be a MKP instance with  two knapsacks of capacities $c_1=47$ and $c_2=64$, and  five items, with weights and profits reported in  Columns 2--3 of Table \ref{table1}.
 
 Let $I_s$ be an instance of BSMKP produced by partitioning each item of $I$ into items of divisible sizes $s_1=1$, $s_2=3$ and $s_3=33$. 
 Hence, since $w_1=s_3$, item 1 is not partitioned and is included in $I_s$. Item 2 can be partitioned into two items of size 1 and profit 2, and one item of size 33 and profit 66. From item 3 the following items are generated:  one item of size 1 and profit 2, one item of size 3 and profit 6, and one item of size 33 and profit 66.  
 Item 4 is partitioned into two items of size 1 and profit 1, four items of size 3 and profit 3, and one item of size 33 and profit 33. Finally,  item 5 can be partitioned into one item of size 1 and profit 1, ten items of size 3 and profit 3, and one item of size 33 and profit 33.  
 Columns 4--9 of Table \ref{table1} report all the items of $I_s$. More precisely,  for each item of MKP, Columns 4, 6 and 8  respectively report the number of items  of size $s_1$, $s_2$ and $s_3$ generated in $I_s$, denoted as $\#s_1$, $\#s_2$ and $\#s_3$. Columns 5, 7 and 9 show the  profits, $v_1$, $v_2 $ and $v_3$, of the generated items of size $s_1$, $s_2$ and $s_3$, respectively.  
 Note that,  $\#s_j=b_j$ for $j=1,2,3$. 
 
\medskip\medskip
\begin{table}
\centering
\scriptsize
\begin{tabular}{|c|c|c||c|c||c|c||c|c|}
\hline j & $w_j$& $p_j$&   \#$s_1$ & $v_1$&  \#$s_2$ & $v_2$&  \#$s_3$ & $v_3$\\
\hline  1 & 33&  99        & -&- & -& -        & 1&99\\
\hline  2 & 35& 70          & 2&2 & -& -        & 1&66 \\
\hline  3 & 37& 74          & 1&2 & 1& 6         & 1&66\\
\hline  4 & 47& 47          & 2&1 & 4& 3         & 1&33 \\
\hline  5 & 64& 64         & 1&1 & 10&3         & 1&33 \\
\hline
\end{tabular}
\caption{Instances $I$ of MKP and $I_s$ of BSMKP.} \label{table1}
\end{table}

Let $z_{MKP}$ and $z_{seq}$ be the  optimal solution values of a MKP instance and of any sequential relaxation, respectively. The following lemma holds.

\begin{lemma}\label{lemma:ub}
Given an instance $I$ of MKP, the optimal solution value  $z_{seq}$ of any sequential relaxation $I_s$ is an upper bound to the  optimal solution value  $z_{MKP}$ of $I$, i.e., 
$$z_{MKP}\leq z_{seq}.$$
\end{lemma}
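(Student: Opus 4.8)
The plan is to take an optimal solution of the MKP instance $I$ and use it to construct a feasible solution of the sequential relaxation $I_s$ whose value equals $z_{MKP}$; since $z_{seq}$ is by definition the optimum of $I_s$, this immediately yields $z_{MKP}\le z_{seq}$.

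First I would fix an optimal assignment $x_{ij}^*$ for $I$ and define a candidate BSMKP solution by the following rule: whenever item $j$ is placed in knapsack $i$ (that is, $x_{ij}^*=1$), I place \emph{all} of its sub-items $S_j=\{j_1,\dots,j_{K_j}\}$ into that same knapsack $i$, and if $j$ is not selected then none of its sub-items are used. This determines the BSMKP variables by letting each $y_{it}$ count how many class-$t$ sub-items are sent to knapsack $i$ under this rule; these are nonnegative integers, so \eqref{seq:int} holds.

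Next I would check the capacity constraints and the objective, which are the routine part. For \eqref{seq:c1}, the total size loaded into knapsack $i$ is $\sum_{j:\,x_{ij}^*=1}\sum_{k\in S_j} s_{j_k}=\sum_{j:\,x_{ij}^*=1} w_j\le c_i=\bar c_i$, using condition \eqref{eq:seq} together with the original capacity constraint \eqref{c1} and the identity $\bar c_i=c_i$ from the Definition. For the value, each selected item contributes $\sum_{k\in S_j} v_{j_k}=p_j$ by the identity noted immediately after the Definition, so the constructed solution has objective exactly $\sum_{i,j} p_j x_{ij}^*=z_{MKP}$.

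The step requiring the most care is the multiplicity constraint \eqref{seq:c2}, because a single class $t$ may collect sub-items originating from several distinct sets $S_j$ (all those sharing the same size and profit), and $b_t$ is their aggregate count. Here I would invoke \eqref{c2}: since $\sum_i x_{ij}^*\le 1$, every sub-item $j_k$ is used in at most one knapsack, and it is used at all only when its parent item $j$ is selected. Consequently the number of class-$t$ sub-items actually placed, summed over all knapsacks, cannot exceed the number available, giving $\sum_i y_{it}\le b_t$. This establishes feasibility of the candidate solution for $I_s$ and completes the argument.
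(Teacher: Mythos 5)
Your proposal is correct and follows essentially the same route as the paper: map a feasible (in particular, optimal) MKP solution to a BSMKP solution by sending every sub-item of a selected item to the same knapsack, then verify feasibility via \eqref{eq:seq} and value preservation via \eqref{eq:seq1}. You spell out the verification of the bound constraints \eqref{seq:c2} explicitly, which the paper's one-line feasibility claim leaves implicit, but the underlying argument is identical.
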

\begin{proof}
Given a feasible solution, $x$, for $I$, let $y$ be a  solution for  $I_s$ obtained by $x$ replacing each item $j$ in $I$ with items  in $S_j$. By Equation \eqref{eq:seq}, $y$ is feasible for  $I_s$, and by Equation \eqref{eq:seq1}, $x$ and $y$  have the same objective function value. This holds even if $x$ is optimal and the thesis follows. 
\end{proof}

Let $z_{LPMKP}$ be the optimal solution of the linear relaxation of the model \eqref{ilp:obj}--\eqref{ilp:bin} for an instance $I$ of MKP. Then, Lemma \ref{lemma:lp} holds.
\begin{lemma}\label{lemma:lp}
Given an instance $I$ of MKP, the upper bound $z_{seq}$ derived from instance   $I_s$ obtained by any sequential relaxation
of $I$ is not bigger than the upper bound  $z_{LPMKP}$ obtained by the linear relaxation of  model \eqref{ilp:obj}--\eqref{ilp:bin}, i.e., 
$$z_{LPMKP}\geq z_{seq}.$$
\end{lemma}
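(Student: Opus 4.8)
The plan is to take an optimal solution of the sequential relaxation and read it back as a feasible (fractional) solution of the LP relaxation of \eqref{ilp:obj}--\eqref{ilp:bin} having the same objective value. Since $z_{LPMKP}$ is the maximum objective over all feasible LP solutions, exhibiting one feasible LP point of value $z_{seq}$ immediately yields $z_{LPMKP}\ge z_{seq}$. This is the reverse direction of Lemma~\ref{lemma:ub}: there one lifts an MKP solution into $I_s$, whereas here one projects an $I_s$ solution down to a fractional MKP solution.

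Concretely, let $y$ be an optimal solution of $I_s$, so that for each knapsack $i$ it specifies which pieces $j_k\in S_j$ are packed into $i$. For every original item $j$ and knapsack $i$ I would set
$$x_{ij}=\frac{1}{w_j}\sum_{k:\,j_k \text{ packed in } i} s_{j_k},$$
i.e.\ $x_{ij}$ is the fraction of the weight of item $j$ packed, as pieces, into knapsack $i$; clearly $x_{ij}\ge 0$. I would then check the two families of LP constraints. For the capacity constraints \eqref{c1}, the quantity $\sum_j w_j x_{ij}=\sum_j\sum_{k:\,j_k\in i} s_{j_k}$ is exactly the total size of the pieces packed in knapsack $i$, which is at most $\bar c_i=c_i$ by feasibility of $y$ for \eqref{seq:c1}. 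For the assignment constraints \eqref{c2}, since the pieces of $S_j$ have total size $\sum_{k\in S_j}s_{j_k}=w_j$ by \eqref{eq:seq} and each such piece is packed in at most one knapsack, $\sum_i x_{ij}=\frac{1}{w_j}\sum_i\sum_{k:\,j_k\in i}s_{j_k}\le 1$; in particular $0\le x_{ij}\le 1$, so $x$ is LP-feasible. Finally, by the profit relation \eqref{eq:seq1} the objective value of $x$ is $\sum_{i,j}p_j x_{ij}=\sum_i\sum_j\sum_{k:\,j_k\in i}\frac{p_j}{w_j}s_{j_k}=\sum_i\sum_j\sum_{k:\,j_k\in i}v_{j_k}=z_{seq}$, the total profit of all packed pieces.

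The one point needing care --- and the main obstacle --- is that in $I_s$ pieces of different original items are merged into a common class whenever they share size and profit, so $y$ records only how many units $y_{it}$ of each class $t$ lie in each knapsack, not which original item each unit came from. To make the construction well defined I would first disaggregate $y$, attributing each packed class-$t$ unit to an original item that contributes a class-$t$ piece. This is always possible because the number of class-$t$ units used, $\sum_i y_{it}\le b_t$, does not exceed the total supply $b_t=\sum_j\#\{k:\,j_k\text{ has class }t\}$; hence a nonnegative integral attribution respecting each item's supply exists, by a transportation/Hall-type feasibility argument (equivalently, aggregating identical item classes leaves $z_{seq}$ unchanged). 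This attribution is precisely what guarantees $\sum_i x_{ij}\le 1$, i.e.\ that no item is packed beyond its single available copy.

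Since all pieces in a class share the same profit, the objective computation is independent of the chosen disaggregation, and the feasibility checks above go through verbatim. Together with Lemma~\ref{lemma:ub} this situates the sequential bound as $z_{MKP}\le z_{seq}\le z_{LPMKP}$.
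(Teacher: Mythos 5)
Your proof is correct and follows essentially the same route as the paper: the paper defines $q_{itj}$ as the number of class-$t$ pieces of $S_j$ packed in knapsack $i$ and sets $x_{ij}=\sum_t s_t q_{itj}/w_j$, which is exactly your fractional assignment, with identical feasibility and objective checks. Your explicit justification of the disaggregation of class counts $y_{it}$ back into per-item contributions is a point the paper's proof implicitly assumes when it introduces $q_{itj}$, so your write-up is if anything slightly more careful on that step.
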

\begin{proof} The Lemma is proved by showing that any feasible solution of $I_s$ is also feasible for the linear relaxation of  model \eqref{ilp:obj}--\eqref{ilp:bin}.
Given a feasible solution, $y$, for $I_s$, let $q_{itj}$ be the number of items of class $t$ (i.e.,  of size $s_{t}$ and profit $v_{t}$) belonging to $S_j$ and assigned to knapsack $i$ in $y$.   
Then, we have $y_{it}=\sum\limits_{j=1}^n q_{itj}$. Note that, by \eqref{eq:seq} and by the definition of $q_{itj}$, we have $\sum\limits_{i=1}^m\sum\limits_{t =1 }^{\bar n} s_{t} q_{itj}\leq w_j$. Let $x$ be  the solution of the linear relaxation of  model \eqref{ilp:obj}--\eqref{ilp:bin} in which
$x_{ij}=\sum\limits_{t =1 }^{\bar n}  s_{t} q_{itj}/ w_j$ is set,  for $j=1,\ldots,n$ and $i=1,\ldots,m$. Then,  $\sum\limits_{i=1}^m x_{ij}=\sum\limits_{i=1}^m\sum\limits_{t =1 }^{\bar n}  s_{t} q_{itj}/ w_j\le 1$ (i.e., $x$ satisfies Constraints  \eqref{c2}). Furthermore, by  \eqref{seq:c1} we have
$$\sum\limits_{j=1}^{n}w_jx_{ij} = \sum\limits_{j=1}^{n}w_j\sum\limits_{t =1 }^{\bar n}  s_{t} q_{itj}/ w_j = 
\sum\limits_{t =1 }^{\bar n}   s_t y_{it}
\leq \bar c_i = c_i,$$
i.e., $x$ satisfies Constraints  \eqref{c1}. Hence, $x$ is a feasible solution for the linear relaxation of  model \eqref{ilp:obj}--\eqref{ilp:bin}.
By formulas \eqref{ilp:obj} and \eqref{seq:obj} and by conditions \eqref{eq:seq1}, the objective functions \eqref{ilp:obj} and \eqref{seq:obj} (computed in $x$ and in $y$, respectively)  have the same objective function value. This holds even if $x$ is optimal and the thesis follows. 
\end{proof}

Hence, the sequential relaxation provides an upper bound that is not worse than the one provided by the linear relaxation of model \eqref{ilp:obj}--\eqref{ilp:bin}.

\medskip
The  exact algorithms  MTM and  MULKNAP \cite{MT1981,Pis99} employ upper bounds computed by the {\em surrogate  relaxation} of the capacity Constraints \eqref{c1}, obtained by replacing them by the single knapsack constraint
$\sum\limits_{i=1}^m\sum\limits_{j=1}^n \pi_i w_j x_{ij} \le c_i,$ where $\pi_1, \ldots, \pi_m$ are non-negative multipliers.
Martello and Toth \cite{MT1981} proved that for any instance of MKP, the optimal choice of multipliers is $\pi_i=k$ for all $i$, where $k$ is a positive constant. Hence, the surrogate relaxation can be found  by solving an ordinary 0--1 Knapsack Problem.
In the following, we denote the upper bound obtained by the surrogate relaxation as $z_{surr}$.

As also shown by the computational analysis in Section \ref{sec:res},  it is not possible to establish a theoretical relationship between the bounds provide by sequential and surrogate relaxations. 

In the following, as an example, we derive  upper bounds from the sequential, surrogate and linear relaxations  for the MKP instance reported in Table \ref{table1}. As shown below, we have that  $z_{seq}$ is smaller than  $z_{LPMKP}$ and $z_{surr}$.
In Pisinger \cite{Pis99}, a procedure is used to tighten the capacity constraints of a MKP instance $I$. In practice, $m$ Subset-sum problems are solved, one for each knapsack, for detecting the maximum knapsack capacities that can be filled by the items in $I$. Then, knapsack capacities are reduced to such maximum values. Note that, for the MKP instance of Table \ref{table1} capacity constraints can not be tightened by this approach.  The optimal solution of the instance consists in assigning items 1 and 3. Hence, $z_{MKP}=173$.  The surrogate bound is found by solving  a single knapsack problem with knapsack capacity $c_1+c_2= 111$. It is easy to see that $z_{surr}=243$, obtained by assigning items  1, 2 and 3 with a total weight of 105.  
 The optimal solution of the sequential relaxation (reported in Columns 4--11 of Table \ref{table1}), is obtained by assigning the new items as follows.  Items assigned to Knapsack 1: the item of size 33 and profit 99,
 the item of size 3 and profit 6, three items of size 1 and profit 2, two items of size 3 and profit 4, two items of size 1 and profit 1.
Items assigned to Knapsack 2: 1 item of size 33 and profit 66, ten items of size 3 and profit 3, 1 item of size 1 and profit 1.  Hence, $z_{seq}=119+97=216$. Finally, the optimal solution value of the linear relaxation of  model \eqref{ilp:obj}--\eqref{ilp:bin} is $z_{LPMKP}=249$.

\section{Generating and solving sequential relaxations}\label{sec:transf} 
In this section, we address the problem of generating  different sequential relaxations from a MKP instance $I$, in order to get small $z_{seq}$ values. 
 In fact, many possible sequential relaxations may exist, depending on the divisible sizes $s_t$ used to generate items in $I_s$. In what follows, we denote {\em  sequential sequence}  the set $S$ containing the divisible sizes of the items of  $I_s$. 
The  procedure described below can be used for finding a series of different  sequential sequences.


Let $j$ be  an item in $I$, let $Q$ be an integer smaller than $w_j$, and let $q\ge 2$ be the biggest  integer smaller than or equal to $Q$ such that $(w_j \mod q)$ is minimum.  
In $I_s$, we denote as {\em reference size}  the value $\bar s=w_j - (w_j \mod q)$. 
In other words, $q$ is the biggest integer not bigger than $Q$ such that $\bar s$ is ``closest'' to $w_j$. 

Note that,  $\bar s$ and $q$ are divisible. At the beginning, $S=\{1, \bar s\}$ is set. Then, the procedure sequentially scans the numbers in the ordered set $T=\{\bar s -q, \bar s -2q, \ldots, q, q-1, q-2,  \ldots, 2\}$:  Whenever an element of the set $T$ is detected  that divides all values in $S$, it is  included  in $S$ (and the search continues considering the next elements of $T$). The procedure ends either when $S$ contains $l_{max}$  elements or when all elements of $T$ are scanned.  

Note that, the above procedure is not polynomial in the input size of the instance $I$, since it depends on the weight $w_j$ and $Q$. However, it can be made faster by suitably choosing small values for $Q$  and $l_{max}$.

Let $S=\{1, s_1, s_2,\ldots, \bar s=s_{\bar n}\}$ be the sequential sequence obtained so far, with $1\le s_1\le s_2\le \ldots s_{\bar n}$.  The items in $I_s$ are generated as follows. Firstly the biggest size $s_{\bar n}$ in $S$ is considered, and, for each item $j$ of $I$, $\lfloor w_j /s_{\bar n} \rfloor$ items are  generated with profit  
$(p_j / w_j) s_{\bar n}$ and size $s_{\bar n}$ and included  in $I_s$. Then, $w_j=w_j- \lfloor w_j /s_{\bar n} \rfloor s_{\bar n}$ is set for all items $j$ of $I$, and the above argument is applied by considering the second biggest size $s_{\bar n-1}$. And so on, until the last size 1 is considered. 
Hence, the instance $I_s$ generated so far will contain at most $n \times \bar n$ item classes.

The overall procedure can be executed more than one time,  by selecting at the beginning a different item $j$ in $I$ (possibly leading to new $q$  and $\bar s$ values). In this way, we get different instances $I_s$, each of them solvable in $O(\bar n^2 + \bar nm)$ by the algorithm proposed in \cite{DettiIPL}. At the end,  the smallest $z_{seq}$ value is returned.  
The overall algorithm is reported in Figure \ref{fig:algo}. In the algorithm,  $Q_{max}$ is an input parameter  used to limit $Q$.  

As an example, let us consider the MKP instance of Columns 1--3 of Table \ref{table1} and let $Q_{max}=10$ and $l_{max}=5$. Let us suppose that  the item $j=5$ is selected. Then, $Q=\min\{Q_{max},w_5\}=\min\{10, 64\}=10$ and $q=8$. Hence, $\bar s=w_5-(w_5 \mod q)=64$, and $S=\{1,64\}$ is initially set. Then, the algorithm scans the ordered sequence  $T=\{\bar s -q=56, \bar s -2q=48, \bar s -3q=40,\ldots, q=8,7,6,\ldots,2\}$ and includes in  $S$ all numbers dividing all the elements of the current set $S$ (until $|S|\le l_{max}$). Hence,   the sequential sequence $S=\{\bar s=s_{\bar n}, \ldots, s_1\}=\{64,32,16,8,1\}$ is get. By solving the BSMKP instance generated from $S$ we get $z_{seq}=249$, equal to the optimal solution of the linear relaxation of  model \eqref{ilp:obj}--\eqref{ilp:bin}. On the the other hand, let us suppose that the algorithm selects item $j=1$. Then, $Q=\min\{Q_{max},w_1\}=\min\{10, 33\}=10$ and $q=3$. Hence, $\bar s=33$, and  $S=\{1,33\}$ is set. Then, the algorithm scans the ordered sequence  $T=\{\bar s -q=30, \bar s -2q=27, \bar s -3q=24,\ldots,6, q=3,2\}$, and produces  the final sequential sequence $S=\{33,3,1\}$. From $S$, we get  the BSMKP instance of Table \ref{table1} with optimal solution value $z_{seq}=216$.

\begin{algorithm}
\caption{Algorithm for generating and solving sequential relaxations of MKP.}\label{fig:algo} 


{\scriptsize
\begin{tabbing}
{\bf Algorithm Sequential Relaxations}\\
{\bf Input:} An instance $I$ of MKP, an integer $Q_{max}$, a maximum number of elements $l_{max}$, a maximum iteration number $It_{max}\le n$;\\
{\bf Output:}  The best upper bound $z_{seq}$;\\
$h=0$; mark all items in $I$ as not visited.\\
{\bf while} ($h\le It_{max}$)\\
{\bf begin }\\
\quad Select a not already visited item $j$ in $I$, and set $Q=\min\{Q_{max}, w_j\}$.\\
\quad Let $q\le Q$ be the biggest integer  such that $(w_j \mod q)$ is minimum. Set $\bar s=w_j-(w_j \mod q)$,  $S=\{\bar s,1\}$.\\
\quad Sequentially scan the ordered set $T=\{\bar s -q, \bar s -2q, \ldots,q, q-1, q-2, \ldots,2\}$ and include in $S$\\
\quad all numbers of $T$ dividing all elements in $S$ until $|S|\le l_{max}$.\\
\quad {\bf while} ($S\ne \emptyset$)\\
\quad {\bf begin }\\
\quad \quad  Let $s_{\bar n}$ be the biggest element in $S$.\\ 
\quad \quad  For all items $l$ in $I$, generate $\lfloor w_l /s_{\bar n} \rfloor$ items   with profit  
$(p_l/ w_l) s_{\bar n}$ and size $s_{\bar n}$.\\
\quad \quad Set $S=S\setminus \{s_{\bar n}\}$.\\
\quad {\bf end }\\
\quad $h=h+1$; mark $j$ as visited.\\
\quad Get $z_{seq}$ by solving $I_s$ by the algorithm proposed in \cite{DettiIPL}.\\
{\bf end}\\
Return the smallest $z_{seq}$ obtained so far.

\end{tabbing}
}
\end{algorithm}


\section{Computational results}\label{sec:res}
In this section, computational results are presented on different sets of benchmark instances, in order to compare  the sequential and the surrogate relaxations, and the linear relaxation of  model \eqref{ilp:obj}--\eqref{ilp:bin}. More precisely, six sets of instances have been used:  the first five sets  are from the literature, while the sixth set contains new instances  with $n/m$ ratios smaller than 2. All the instances were obtained through Pisinger’s instance generator. In all the instances, knapsack capacities have been tightened as proposed in  \cite{Pis99} (by solving a series of Subset-sum Problems), and the surrogate upper bound is computed by the $C$ code developed by Pisinger  \cite{Pis99}. The instance generator  and the code to solve the surrogate relaxation are available  at $
http://hjemmesider.diku.dk/\sim pisinger/codes.html$. The Algorithm \ref{fig:algo} for computing the sequential upper bound  has been  also coded in $C$. In the algorithm, parameters have been set as follows: $Q_{max}=10$, $l_{max}=5$ and $It_{max}=10$. 


All the experiments have been performed on a machine equipped with Intel i7, 2.5 GHz Quad-core processor and 16 Gb of RAM. Gurobi solver has been used to compute the linear relaxations of  the Integer Linear Programming formulation \eqref{ilp:obj}--\eqref{ilp:bin}.

The first five sets of instances contain instances  generated  in Dell'amico {\em et al.} \cite{Dell'Amico}, and firstly proposed by Kataoka and Yamada \cite{Kataoka}  and Fukunaga \cite{Fukunaga11}, denoted as {\em SMALL}, $Fk_1$, $Fk_2$, $Fk_3$ and $Fk_4$ (available at $http://or.dei.unibo.it/library$). The sixth set, denoted as $Set_6$, contains large randomly generated instances with  $n/m < 2$ and is available at $https://www3.diism.unisi.it/\sim detti/SequentialBound.html$. As in  Pisinger  \cite{Pis99}, four classes of correlation are considered: uncorrelated, weakly correlated, strongly correlated, subset-sum.
In the following, the instances are described into detail. 

{\em SMALL} is a set of 180  instances  proposed by Kataoka and Yamada \cite{Kataoka} 
for a variant of the MKP with assignment restrictions, and adapted to MKP by Dell'Amico {\em et al.} \cite{Dell'Amico}  by simply disregarding the additional constraints.  This set contains uncorrelated, weakly correlated and strongly correlated instances with $m \in \{10, 20\}$ and $n \in \{20, 40, 60\}$, for a total of 18 settings (10 instances exist for each setting).  Weights $w_j$ are  uniformly distributed in $[1, 1000]$ in all the {\em SMALL} instances.  In uncorrelated instances, profits $p_j$ are  uniformly distributed in $[1, 1000]$. In weakly correlated  instances, profits  are set as $p_j = 0.6 w_j + \theta_j$, with $\theta_j$ uniformly random in $[1,400]$. In strongly correlated instances, $p_j = w_j +200$ is set.
The knapsack capacities were generated as $c_i =\lfloor{\sigma\lambda_i \sum_{j=1}^n w_j}\rfloor$, with $\lambda$ uniformly distributed in $[0,1]$ such that $\sum_{i=1}^m \lambda_i=1$, and $\sigma \in \{0.25, 0.5, 0.75\}$.
The values of $n$ and $m$ and the correlation classes of the instances in this set are reported in the second row of Table \ref{tab:inst}.

The other set of instances,  i.e., $Fk_1$--$Fk_4$ and  $Set_6$, have been generated as in \cite{Pis99}. 
In $Set_6$, data are generated according to different ranges $R=100, 1000, 10000$, while $R=1000$ has been used in all the instances of Sets  $Fk_1$--$Fk_4$. In 
uncorrelated instances: $p_j$ and $w_j$ are randomly distributed in $[10,R]$. 
In weakly correlated instances, $w_j$ is randomly distributed in $[10, R]$ and $p_j$ is randomly distributed in $[w_j-R/10,w_j+ R/10] $ such that $p_j\ge1$.  In strongly correlated instances, $w_j$ is randomly distributed in $[10,R]$ and $p_j$ is set to $w_j+10$. In subset-sum  instances, $w_j$ is randomly distributed in $[10, R]$ and $p_j$ equals $w_j$.
The  first $m-1$ knapsack capacities $c_i$ are randomly distributed in 
$\left[ 0.4 \sum\limits_{j=1}^{n}w_j/m, 0.6 \sum\limits_{j=1}^{n}w_j/m\right] $
and 
$c_m=0.5 \sum\limits_{j=1}^{n}w_j - \sum\limits_{i=1}^{m-1}c_i. $

The sets $Fk_1$, $Fk_2$, $Fk_3$ and $Fk_4$ contain 480 instances each. They have been generated in \cite{Dell'Amico} and reproduce those used in  \cite{Fukunaga11}. The values of $n$ and $m$ and the correlation classes of the instances in sets $Fk_1$--$Fk_4$ are reported in Rows 3--5 of Table \ref{tab:inst}. Twenty instances exist for each setting. 
$Set_6$ contains 1620 large instances ($n/m$  ranges from 150/80 to 45000/30000), weights and profits are generated for all values of $R=100, 1000, 10000$ as in \cite{Pis99}. The values of $n$ and $m$ and the correlation classes of the instances in set $Set_6$ are reported in Rows 6--7 of Table \ref{tab:inst}. (For each setting, 20 instances exist).

Tables \ref{tab:small}--\ref{tab:Set6-2} report the computational results on the six sets. In the tables, $z_{seq}$,  $z_{surr}$ and $z_{LP}$ are the average values of the sequential, surrogate and linear relaxations, respectively, and $t_{seq}$, $t_{surr}$ and $t_{LP}$ are the related average computational times.
Tables \ref{tab:small}--\ref{tab:FK4} report the results for the sets SMALL and $Fk_1$--$Fk_4$, respectively. 
 In Tables  \ref{tab:small}--\ref{tab:FK4}, ‘‘opt'' is  the optimal  average solution values on each setting, kindly provided by the authors of \cite{Dell'Amico}. A ‘‘-'' in this column means that the optimum is not known for at least one instance of the setting. In Columns 12--14 of the tables, $gap_{se}$, $gap_{su}$ and $gap_{LP}$ are the percentage optimal gaps  of  sequential, surrogate and linear relaxations computed as $(z_{seq} - opt)/opt\times 100$, $(z_{surr} - opt)/opt\times 100$ and $(z_{LP} - opt)/opt\times 100$, respectively.

Table \ref{tab:small} reports the results on the {\em SMALL}  set. The results in each row of the table are average values on 10 instances. The last row of the table reports the average results over all the instances. Observe that, in general, all gaps are big for instances with small ratios $n/m$ and  decrease as the ratios increase. In fact, when $n/m=1$, we have $gap_{se}$, $gap_{su}$ and $gap_{LP}$   equal to about 45\%, 74\% and 87\% on average, respectively, with the sequential bound attaining the best performance (especially on uncorrelated and weakly instances). On instances with $n/m=2$, the sequential and surrogate relaxations produce the best results, with $gap_{se}=5.48$, $gap_{su}=5.63$  and $gap_{LP}=7.66$ on average. In instances with bigger ratios (i.e., $n/m=3,4,6$), $gap_{su}$ is always smaller than $gap_{se}$ (and obviously than $gap_{LP}$). In fact, we have $gap_{su}=0.15$, $gap_{se}=0.59$ and $gap_{se}=0.65$ on average.

The computational results on $FK_1$--$FK_4$ instances are shown  in Tables \ref{tab:FK1}--\ref{tab:FK4}, respectively, where each row report average values on the 20 instances of each setting (with the same $n$, $m$ and correlation class). 
The trends  on these instances are similar to those highlighted on SMALL instances.
In fact, 
the biggest optimality gaps are attained on instances with the smallest ratio $n/m=2$. On these instances, $gap_{se}$ is definitely smaller than $gap_{su}$ and $gap_{LP}$ on uncorrelated and weakly correlated instances, and slightly smaller or equal on strongly correlated instances. When $n/m=3$,  $z_{seq}$ is smaller  than $z_{surr}$ on uncorrelated and weakly correlated instances,  while $z_{surr}$ is smaller on  strongly correlated instances. On instances with biggest ratios, $z_{surr}$ is generally smaller for uncorrelated, weakly and  strongly correlated instances. In all subset-sum instances of sets $FK_1$--$FK_4$, the sequential and surrogate relaxations produce the same bounds, that are equal or very close to the bounds provided by the linear relaxation.

As Tables \ref{tab:FK2}--\ref{tab:FK4} show,  the optimality gaps can not be computed for some instances of sets $FK_2$, $FK_3$ and $FK_4$ with ratios $n/m=3,4,5$, since, a the the best of our knowledge,  the optimal solutions   are  not available for some of these instances  in the literature.  In fact, as shown in the detailed analysis reported in Tables 5 and 6 of \cite{Dell'Amico}, it turns out that the  effective Hy-MKP approach (proposed in \cite{Dell'Amico})  fails to find optimal solutions especially on instances with  $ n/m = 3$, and on some instances with ratios 4 and 5.

For a clearer comparison,   Table \ref{tab:compare} reports the gaps between sequential and surrogate bounds  for the  instances with ratios $n/m=3,4,5$ belonging to the sets $FK_2$, $FK_3$ and $FK_4$. In the table, $g_{se-LP}$ is the percentage gap between $z_{seq}$ and  $z_{LP}$, and $g_{su-LP}$ is the percentage gap between $z_{surr}$ and  $z_{LP}$, computed as 
  $(z_{LP} - z_{seq})/z_{seq}\times 100$ and $(z_{LP} - z_{surr})/z_{surr}\times 100$, respectively. 
  Hence, the bigger  $g_{se-LP}$ and $g_{su-LP}$ are the better  the sequential and surrogate bounds  are.
As shown in Table \ref{tab:compare}, $z_{seq}$ is better than $z_{surr}$ on uncorrelated and weakly correlated instances with ratio 3. In fact, $g_{se-LP}$ and $g_{su-LP}$ respectively are equal to 0.53 and 0.06 on average. On the remaining instances, $z_{seq}$  is essentially equal to $z_{LP}$ while $z_{surr}$ is slightly better with  $g_{su-LP}=0.02$ on average.

The computational times on SMALL and $FK_1$--$FK_4$ instances are very small for  the sequential (1 millisecond or less on average) and surrogate (from 1 to 3 milliseconds) relaxations, while the linear relaxation requires about 0.39 seconds on average.

\begin{table}
\centering
\scriptsize
\begin{tabular}{|c|c|c|c|}
Set &$n/m$ & $w_j$ in &Correlation \\
\hline
{\em SMALL}&$\{20/10, 40/10, 60/10, 20/20, 40/20, 60/20\}$&$[1-1000]$&\{uncorr., weekly, strongly\}\\
$FK_1$& $\{60/30, 45/15, 48/12, 75/15, 60/10, 100/10\}$&$[10-1000]$&\{uncorr., weakly, strongly, subset-sum\}\\
$FK_2 $&$\{120/60, 90/30, 96/24, 150/30, 120/20, 200/20\}$&$[10-1000]$&\{uncorr., weakly, strongly, subset-sum\}\\
$FK_3 $&$\{180/90, 135/45, 144/36, 225/45, 180/30, 300/30\}$&$[10-1000]$&\{uncorr., weakly, strongly, subset-sum\}\\
$FK_4$& $\{300/150, 225/75, 240/60, 375/75, 300/50, 500/50\}$&$[10-1000]$&\{uncorr., weakly, strongly, subset-sum\}\\
$Set_6$& \{150/80, 300/160, 600/350,1200/700, 2500/1400, &&\\
&5000/2800,10000/5600, 20000/13000, 45000/30000\} & \{[10-100], [10-1000], &\\
&&[10-10000]\} &\{uncorr., weakly, strongly\}\\
\hline
\end{tabular}
\caption{Description of the instances.} \label{tab:inst}
\end{table}


Tables  \ref{tab:Set6-1} and \ref{tab:Set6-2} report the computational results  on instances of $Set_6$. Observe that, on this set, the  ratio $n/m$ is very small, ranging from about 1.5  to 1.9. In each table, for each  $n$, $m$, $R$, and correlation class, the average over the 20 instances is reported.
In Table  \ref{tab:Set6-1}, the results on the smallest instances of  $Set_6$ are reported. More precisely, in the last two columns of the table, $g_{se-LP}$ is the percentage gap between $z_{seq}$ and  $z_{LP}$, and $g_{su-LP}$ is the percentage gap between $z_{surr}$ and  $z_{LP}$, computed as in Table \ref{tab:compare}, i.e.,
  $(z_{LP} - z_{seq})/z_{seq}\times 100$ and $(z_{LP} - z_{surr})/z_{surr}\times 100$, respectively. 
  First observe that, in Table  \ref{tab:Set6-1}, the linear relaxation requires about 15 seconds on average, but more than 130 seconds on instances with $m=1400$ and $n=2500$. On the other hand, the  computational times of the sequential and surrogate relaxations are negligible, about  3 ms and 5 ms on average, respectively. The sequential relaxation  always attains the best performances, with $g_{se-LP}=8\%$ and $g_{su-LP}=0.01\%$ on average.  The   sequential bounds are  smaller than  $z_{surr}$ especially on uncorrelated and weakly correlated instances. 
 
 Table  \ref{tab:Set6-2}  reports the results on the biggest instances of $Set_6$. On these instances, due to the high computational times to compute the linear relaxations, only the sequential and surrogate bounds are compared. Note that, $z_{seq}$ is always better than  $z_{surr}$. In fact,  $z_{seq}$ is about 20\%, 9\% and 2.5\% lower than  $z_{surr}$ on uncorrelated, weakly correlated and strongly correlated instances, respectively. Regarding the computational times for instances of $Set_6$, the computation of the surrogate and sequential bounds require about 0.036 and  0.08 seconds on average. However, the surrogate relaxation 
 is always faster than the  sequential relaxation  on uncorrelated and weakly  correlated instances. 

Summarizing, from a quality point of view, the sequential relaxation attains good performances  for uncorrelated and weakly correlated instances   with $n/m$ ratios smaller than or equal to 3. The surrogate relaxation produces the best results on strongly correlated instances and on instances with $n/m>3$.
On subset-sum instances with $n/m\ge 2$ the 
sequential, the surrogate and the linear relaxation attain the same results.
On instances with $n/m<2$  the sequential relaxation always produces the best results. Regarding the computational times, in general, the sequential upper bound can be computed with a small computational effort: it requires at most less than 0.3 seconds on the biggest instances of $Set_6$.
Such facts suggest that a combined use of sequential and surrogate relaxations could be effective when employed in enumeration solution schemes for MKP.


\begin{table}
\centering
\scriptsize
\begin{tabular}{|c|c|c|c||c|c|c|c|c|c|c|c|c|c|}\hline
$m$&$n$ &$n/m$& Corr. & $z_{seq}$ & $t_{seq}$ &  $z_{surr}$ & $t_{surr}$ &$z_{LP}$&$t_{LP}$&opt&$gap_{se}$&$gap_{su}$&$gap_{LP}$\\\hline
20&20&1&unc.&5963.31&$<$0.001&7659.5&$<$0.001&8071.08&0.57&4685.8&27.26&63.46&72.25\\
20&20&1&wea.&4633.687&$<$0.001&5696.6&$<$0.001&6158.33&0.42&2957&56.70&92.65&108.26\\
20&20&1&str.&9015.2&$<$0.001&9973.7&0.002&10758.88&0.38&6010.3&50&65.94&79.01\\
\hline10&20&2&unc.&7869.777&$<$0.001&7863&0.001&8075.55&0.54&7483.5&5.16&5.07&7.91\\
10&20&2&wea.&6001.897&$<$0.001&5986.3&$<$0.001&6162.66&0.58&5594.4&7.28&7.01&10.16\\
10&20&2&str.&10560.6&$<$0.001&10444.1&$<$0.001&10766.88&0.43&9740.9&8.42&7.22&10.53\\
\hline20&40&2&unc.&16078.99&$<$0.001&16368.9&$<$0.001&16484.34&0.57&15462.6&3.99&5.86&6.61\\
20&40&2&wea.&12178.64&$<$0.001&12351.4&$<$0.001&12454.56&0.42&11638.8&4.64&6.12&7.01\\
20&40&2&str.&21178.92&$<$0.001&20993.2&0.002&21248.26&0.37&20478.5&3.42&2.51&3.76\\
\hline20&60&3&unc.&24783.75&$<$0.001&24717.7&$<$0.001&24800.72&0.57&24632&0.62&0.35&0.68\\
20&60&3&wea.&18777.19&$<$0.001&18739.1&$<$0.001&18788.38&0.42&18661.4&0.62&0.42&0.68\\
20&60&3&str.&31800.32&$<$0.001&31608.3&0.002&31816.56&0.37&31535.6&0.84&0.23&0.89\\
\hline10&40&4&unc.&16464.61&$<$0.001&16395.4&$<$0.001&16488.41&0.57&16366.7&0.60&0.18&0.74\\
10&40&4&wea.&12446.41&$<$0.001&12397.3&$<$0.001&12459.13&0.48&12379.1&0.54&0.15&0.65\\
10&40&4&str.&21235.54&$<$0.001&21024.1&0.003&21256.26&0.42&21011.2&1.07&0.06&1.17\\
\hline10&60&6&unc.&24797.61&$<$0.001&24728.6&0.001&24804.74&0.57&24728.6&0.28&0&0.31\\
10&60&6&wea.&18790.28&$<$0.001&18746.1&$<$0.001&18793.29&0.42&18746.1&0.24&0&0.25\\
10&60&6&str.&31819.13&$<$0.001&31660.2&0.001&31825.56&0.41&31660.2&0.50&0&0.52\\
\hline Av & & & &16355.33&$<$0.001&16519.64&0.001&16734.09&0.47&15765.15&9.56&14.29&17.30\\
\hline 

\end{tabular}
\caption{Results on {\em SMALL} instances.} \label{tab:small}
\end{table}

\begin{table}
\centering
\scriptsize
\begin{tabular}{|c|c|c|c||c|c|c|c|c|c|c|c|c|c|}\hline
$m$&$n$ &$n/m$& Corr. & $z_{seq}$ & $t_{seq}$ &  $z_{surr}$ & $t_{surr}$ &$z_{LP}$&$t_{LP}$&opt&$gap_{se}$&$gap_{su}$&$gap_{LP}$\\\hline
10&60&6&unc.&23933.99&$<$0.001&23867.05&$<$0.001&23940.81&0.24&23867.05&0.28&0&0.31\\
10&60&6&wea.&16567.99&$<$0.001&16543.95&$<$0.001&16568.94&0.07&16540.9&0.16&0.02&0.17\\
10&60&6&str.&15076.14&$<$0.001&15071.55&0.001&15076.26&0.07&15071.55&0.03&0&0.03\\
10&60&6&s-s&14649.50&$<$0.001&14649.5&0.001&14649.50&0.07&14649.5&0&0&0\\
\hline 10&100&10&unc.&40256.88&$<$0.001&40207.05&0.001&40259.40&0.07&40207.05&0.12&0&0.13\\
10&100&10&wea.&27622.95&$<$0.001&27608.3&0.001&27623.77&0.07&27608.3&0.05&0&0.06\\
10&100&10&str.&25438.62&$<$0.001&25432.45&0.001&25438.69&0.07&25432.45&0.02&0&0.02\\
10&100&10&s-s&24729.45&$<$0.001&24729.45&0.001&24729.45&0.06&24729.45&0&0&0\\
\hline 12&48&4&unc.&18948.89&$<$0.001&18886.05&$<$0.001&18957.97&0.06&18871.35&0.41&0.08&0.46\\
12&48&4&wea.&13094.05&$<$0.001&13068.6&$<$0.001&13095.99&0.06&13024.1&0.54&0.34&0.55\\
12&48&4&str.&11961.72&$<$0.001&11956.2&0.001&11961.86&0.06&11955.5&0.05&0.01&0.05\\
12&48&4&s-s&11620.20&$<$0.001&11620.2&0.001&11620.20&0.06&11619.8&0&0&0\\
\hline 15&45&3&unc.&17751.83&$<$0.001&17787.75&$<$0.001&17857.79&0.06&17575.65&1.00&1.21&1.61\\
15&45&3&wea.&12802.02&$<$0.001&12832.45&$<$0.001&12860.73&0.06&12552.4&1.99&2.23&2.46\\
15&45&3&str.&12123.22&$<$0.001&12116.5&0.001&12123.56&0.06&12089.85&0.28&0.22&0.28\\
15&45&3&s-s&11811.25&$<$0.001&11811.25&0.001&11811.50&0.06&11790&0.18&0.18&0.18\\
\hline 15&75&5&unc.&30128.45&$<$0.001&30075.45&0.001&30133.00&0.06&30075.45&0.18&0&0.19\\
15&75&5&wea.&20677.82&$<$0.001&20660.85&0.001&20678.78&0.07&20649.85&0.14&0.05&0.14\\
15&75&5&str.&18804.37&$<$0.001&18798.95&0.001&18804.47&0.06&18798.95&0.03&0&0.03\\
15&75&5&s-s&18271.40&$<$0.001&18271.4&0.001&18271.40&0.06&18271.4&0&0&0\\
\hline 30&60&2&unc.&22413.89&$<$0.001&24725.45&0.001&24797.32&0.06&19412&15.46&27.37&27.74\\
30&60&2&wea.&16205.79&$<$0.001&17153.65&0.001&17179.30&0.07&12158.95&33.28&41.08&41.29\\
30&60&2&str.&15779.16&$<$0.001&15780.85&0.001&15786.51&0.07&12515&26.08&26.10&26.14\\
30&60&2&s-s&15368.90&$<$0.001&15368.9&0.001&15369.40&0.06&12152.5&26.47&26.47&26.47\\
\hline Av & & & &19001.60&$<$0.001&19125.99&0.001&19149.86&0.07&18400.79&4.45&5.22&5.35\\
\hline 

\end{tabular}
\caption{Results on $FK_1$ instances.} \label{tab:FK1}
\end{table}

\begin{table}
\centering
\scriptsize
\begin{tabular}{|c|c|c|c||c|c|c|c|c|c|c|c|c|c|}\hline
$m$&$n$ &$n/m$& Corr. & $z_{seq}$ & $t_{seq}$ &  $z_{surr}$ & $t_{surr}$ &$z_{LP}$&$t_{LP}$&opt&$gap_{se}$&$gap_{su}$&$gap_{LP}$\\\hline
20&120&6&unc.&48174.79&$<$0.001&48129.25&$<$0.001&48177.20&0.55&48129.25&0.09&0&0.10\\
20&120&6&wea.&32949.39&$<$0.001&32935.85&$<$0.001&32949.96&0.55&32935.85&0.04&0&0.04\\
20&120&6&str.&30861.78&$<$0.001&30856.75&0.001&30861.87&0.42&30856.75&0.02&0&0.02\\
20&120&6&s-s&30014.25&$<$0.001&30014.25&0.001&30014.25&0.40&30014.25&0&0&0\\
\hline 20&200&10&unc.&80121.46&0.001&80095.65&$<$0.001&80122.77&0.39&80095.65&0.03&0&0.03\\
20&200&10&wea.&55415.80&0.001&55408.05&0.001&55416.14&0.37&55408.05&0.01&0&0.01\\
20&200&10&str.&51581.81&0.001&51577.65&0.001&51581.85&0.34&51577.65&0.01&0&0.01\\
20&200&10&s-s&50170.85&$<$0.001&50170.85&0.001&50170.85&0.34&50170.85&0&0&0\\
\hline 24&96&4&unc.&38643.99&$<$0.001&38593.05&0.001&38646.53&0.33&38590&0.14&0.01&0.15\\
24&96&4&wea.&26438.73&$<$0.001&26423.3&0.001&26439.44&0.33&26390.5&0.18&0.12&0.19\\
24&96&4&str.&24382.23&$<$0.001&24378&0.001&24382.33&0.32&24378&0.02&0&0.02\\
24&96&4&s-s&23701.70&$<$0.001&23701.7&0.001&23701.70&0.31&23701.7&0&0&0\\
\hline 30&90&3&unc.&36020.33&$<$0.001&36211.85&0.001&36260.69&0.31&35804.5&0.60&1.14&1.27\\
30&90&3&wea.&25017.34&$<$0.001&25115.25&$<$0.001&25132.18&0.31&24699.95&1.28&1.68&1.75\\
30&90&3&str.&23231.50&$<$0.001&23225.6&0.001&23231.64&0.30&23222&0.04&0.02&0.04\\
30&90&3&s-s&22596.10&$<$0.001&22596.1&0.001&22596.15&0.31&-&-&-&-\\
\hline 30&150&5&unc.&60157.34&0.001&60119.3&0.001&60158.92&0.30&60119.3&0.06&0&0.07\\
30&150&5&wea.&41743.95&0.001&41733.95&0.001&41744.33&0.30&41733.3&0.03&0&0.03\\
30&150&5&str.&38687.99&0.001&38683.65&0.001&38688.04&0.30&38683.65&0.01&0&0.01\\
30&150&5&s-s&37629.65&$<$0.001&37629.65&0.001&37629.65&0.29&37629.65&0&0&0\\
\hline 60&120&2&unc.&43708.45&$<$0.001&48671.45&0.001&48710.32&0.29&37433.85&16.76&30.02&30.12\\
60&120&2&wea.&31814.82&$<$0.001&33941.2&0.001&33955.99&0.29&23446.75&35.69&44.76&44.82\\
60&120&2&str.&31081.82&$<$0.001&31096.05&0.001&31101.54&0.30&23700.15&31.15&31.21&31.23\\
60&120&2&s-s&30256.55&$<$0.001&30256.55&0.001&30256.55&0.30&22970.65&31.72&31.72&31.72\\
  \hline Av & & & &38100.11&$<$0.001&38398.5396&0.001&38413.79&0.34&37464.88&5.13&6.12&6.16\\
 \hline
\end{tabular}
\caption{Results on $FK_2$ instances.} \label{tab:FK2}
\end{table}

\begin{table}
\centering
\scriptsize
\begin{tabular}{|c|c|c|c||c|c|c|c|c|c|c|c|c|c|}\hline
$m$&$n$ &$n/m$& Corr. & $z_{seq}$ & $t_{seq}$ &  $z_{surr}$ & $t_{surr}$ &$z_{LP}$&$t_{LP}$&opt&$gap_{se}$&$gap_{su}$&$gap_{LP}$\\\hline
30&180&6&unc.&71941.09&0.001&71914.45&$<$0.001&71942.72&0.33&71914.45&0.04&0&0.04\\
30&180&6&wea.&49796.58&0.001&49786.4&$<$0.001&49796.85&0.33&-&-&-&-\\
30&180&6&str.&46498.21&0.001&46494&0.003&46498.32&0.33&46494&0.01&0&0.01\\
30&180&6&s-s&45228.55&$<$0.001&45228.55&0.003&45228.60&0.33&45228.55&0&0&0\\
\hline 30&300&10&unc.&120389.12&0.001&120370.45&0.003&120390.18&0.29&120370.45&0.02&0&0.02\\
30&300&10&wea.&82745.83&0.001&82740&0.002&82746.01&0.29&82740&0.01&0&0.01\\
30&300&10&str.&77481.90&0.001&77476.65&0.004&77481.93&0.29&77476.65&0.01&0&0.01\\
30&300&10&s-s&75366.75&0.001&75366.75&0.002&75366.75&0.29&75366.75&0&0&0\\
\hline 36&144&4&unc.&57575.14&0.001&57539.95&0.003&57577.51&0.33&57539.95&0.06&0&0.07\\
36&144&4&wea.&40025.04&0.001&40012.3&0.002&40025.50&0.33&-&-&-&-\\
36&144&4&str.&36915.62&0.001&36910.25&0.003&36915.68&0.33&36910.25&0.01&0&0.01\\
36&144&4&s-s&35898.25&$<$0.001&35898.25&0.003&35898.25&0.33&35898.25&0&0&0\\
\hline 45&135&3&unc.&54277.77&$<$0.001&54490.65&0.002&54533.76&0.32&54024.4&0.47&0.86&0.94\\
45&135&3&wea.&37481.28&0.001&37637.95&0.002&37650.88&0.32&37212.8&0.72&1.14&1.18\\
45&135&3&str.&34968.79&$<$0.001&34963.4&0.003&34968.86&0.32&-&-&-&-\\
45&135&3&s-s&34019.90&$<$0.001&34019.9&0.003&34019.90&0.32&-&-&-&-\\
\hline 45&225&5&unc.&90132.20&0.001&90107.55&0.003&90133.34&0.29&90107.55&0.03&0&0.03\\
45&225&5&wea.&62117.52&0.001&62110.35&0.003&62117.92&0.28&-&-&-&-\\
45&225&5&str.&58143.37&0.001&58138.5&0.004&58143.41&0.29&58138.5&0.01&0&0.01\\
45&225&5&s-s&56557.60&$<$0.001&56557.6&0.003&56557.60&0.29&56557.6&0&0&0\\
\hline 90&180&2&unc.&64184.24&0.001&72464.65&0.003&72498.32&0.41&55174.75&16.33&31.34&31.40\\
90&180&2&wea.&47594.69&0.001&50527.85&0.003&50539.64&0.41&34645&37.38&45.84&45.88\\
90&180&2&str.&47221.35&0.001&47295.2&0.003&47300.31&0.41&36306.3&30.06&30.27&30.28\\
90&180&2&s-s&45982.15&$<$0.001&46036.7&0.003&46036.70&0.41&35208.8&30.60&30.75&30.75\\
 \hline Av & & & &57189.29&0.001&57670.3458&0.003&57682.04&0.33&58279.74&6.09&7.38&7.40\\
 \hline
\end{tabular}
\caption{Results on $FK_3$ instances.} \label{tab:FK3}
\end{table}

\begin{table}
\centering
\scriptsize
\begin{tabular}{|c|c|c|c||c|c|c|c|c|c|c|c|c|c|}\hline
$m$&$n$ &$n/m$& Corr. & $z_{seq}$ & $t_{seq}$ &  $z_{surr}$ & $t_{surr}$ &$z_{LP}$&$t_{LP}$&opt&$gap_{se}$&$gap_{su}$&$gap_{LP}$\\
\hline
50&300&6&unc.&120225.26&0.001&120208.50&$<$0.001&120226.33&0.80&120208.50&0.01&0&0.01\\
50&300&6&wea.&82739.73&0.001&82733.60&0.001&82739.92&0.82&-&-&-&-\\
50&300&6&str.&77626.97&0.001&77621.50&0.004&77627.01&0.80&77621.50&0.01&0&0.01\\
50&300&6&s-s&75513.00&0.000&75513.00&0.002&75513.00&0.61&75513.00&0&0&0\\
\hline 50&500&10&unc.&201363.40&0.002&201349.45&0.003&201364.22&0.64&201349.45&0.01&0&0.01\\
50&500&10&wea.&138576.70&0.002&138572.40&0.003&138576.79&0.66&138572.40&0&0&0\\
50&500&10&str.&129921.07&0.002&129915.00&0.006&129921.10&0.69&129915.00&0&0&0\\
50&500&10&s-s&126402.00&0.001&126402.00&0.002&126402.00&0.57&126402.00&0&0&0\\
\hline 60&240&4&unc.&95969.46&0.001&95946.15&0.003&95970.42&0.40&-&-&-&-\\
60&240&4&wea.&66057.19&0.001&66049.95&0.003&66057.52&0.39&-&-&-&-\\
60&240&4&str.&61995.19&0.001&61991.20&0.003&61995.23&0.40&-&-&-&-\\
60&240&4&s-s&60307.25&0.000&60307.25&0.003&60307.25&0.38&60307.25&0&0&0\\
\hline 75&225&3&unc.&89875.07&0.001&90309.05&0.004&90333.29&0.49&-&-&-&-\\
75&225&3&wea.&62468.21&0.001&62848.35&0.003&62855.22&0.49&-&-&-&-\\
75&225&3&str.&58349.92&0.001&58345.10&0.004&58349.95&0.49&-&-&-&-\\
75&225&3&s-s&56766.20&0.000&56766.20&0.003&56766.20&0.48&-&-&-&-\\
\hline 75&375&5&unc.&150371.81&0.001&150353.20&0.003&150373.02&1.12&-&-&-&-\\
75&375&5&wea.&104389.88&0.001&104384.15&0.003&104390.04&0.79&-&-&-&-\\
75&375&5&str.&97111.32&0.001&97105.70&0.005&97111.35&1.01&97105.70&0.01&0&0.01\\
75&375&5&s-s&94470.20&0.001&94470.20&0.003&94470.20&0.54&94470.20&0&0&0\\
\hline 150&300&2&unc.&105646.90&0.001&120378.55&0.002&120401.98&1.37&89253.45&18.37&34.87&34.90\\
150&300&2&wea.&77750.28&0.001&82871.45&0.004&82878.32&1.13&56429.45&37.78&46.86&46.87\\
150&300&2&str.&78215.07&0.001&78283.25&0.005&78288.99&1.26&57565.95&35.87&35.99&36.00\\
150&300&2&s-s&76161.25&0.001&76182.85&0.003&76182.85&0.60&55772.45&36.56&36.60&36.60\\
 \hline Av & & & &95344.72&0.001&96204.50&0.003&96212.59&0.71&98606.16&9.19&11.02&11.03\\
 \hline
\end{tabular}
\caption{Results on $FK_4$ instances.} \label{tab:FK4}
\end{table}

\begin{table}
\centering
\scriptsize
\begin{tabular}{|c|c|c|c||c|c|}\hline
$m$&$n$ &$n/m$& Corr. &$g_{se-LP}$&$g_{su-LP}$\\\hline
 30&90&3&unc.&0.67&0.13\\
 45&135&3&unc.&0.47&0.08\\
 75&225&3&unc.&0.51&0.03\\
30&90&3&wea.&0.46&0.07\\
45&135&3&wea.&0.45&0.03\\
75&225&3&wea.&0.62&0.01\\
30&90&3&str.&0&0.03\\
45&135&3&str.&0&0.02\\
75&225&3&str.&0&0.01\\
30&90&3&s-s&0&0\\
45&135&3&s-s&0&0\\
75&225&3&s-s&0&0\\
 24&96&4&unc.&0.01&0.14\\
 36&144&4&unc.&0&0.07\\
 60&240&4&unc.&0&0.03\\
24&96&4&wea.&0&0.06\\
36&144&4&wea.&0&0.03\\
60&240&4&wea.&0&0.01\\
24&96&4&str.&0&0.02\\
36&144&4&str.&0&0.01\\
60&240&4&str.&0&0.01\\
24&96&4&s-s&0&0\\
36&144&4&s-s&0&0\\
60&240&4&s-s&0&0\\
 30&150&5&unc.&0&0.07\\
 45&225&5&unc.&0&0.03\\
 75&375&5&unc.&0&0.01\\
30&150&5&wea.&0&0.02\\
45&225&5&wea.&0&0.01\\
75&375&5&wea.&0&0.01\\
30&150&5&str.&0&0.01\\
45&225&5&str.&0&0.01\\
75&375&5&str.&0&0.01\\
30&150&5&s-s&0&0\\
45&225&5&s-s&0&0\\
75&375&5&s-s&0&0\\
\hline
\end{tabular}
\caption{Gap results for $FK_2$--$FK_4$ instances with ratios $n/m=3,4,5$.} \label{tab:compare}
\end{table}

\begin{table}
\centering
\scriptsize
\begin{tabular}{|c|c|c|c|c||c|c|c|c|c|c|c|c|}\hline
$m$&$n$ &$n/m$&$R$& Corr. & $z_{seq}$ & $t_{seq}$ &  $z_{surr}$ & $t_{surr}$ &$z_{LP}$&$t_{LP}$&$g_{se-LP}$&$g_{su-LP}$\\
\hline
80&150&1.88&100&unc.&5285.17&$<$ 0.001&6011.55&$<$ 0.001&6014.50&0.56&13.80&0.05\\
80&150&1.88&100&wea.&4220.98&$<$ 0.001&4492.4&$<$ 0.001&4493.52&0.57&6.46&0.02\\
80&150&1.88&100&str.&5097.86&$<$ 0.001&5113.15&0.001&5118.76&0.55&0.41&0.11\\
\hline 80&150&1.88&1000&unc.&53472.20&0.001&61305.85&0.001&61336.80&0.52&14.71&0.05\\
80&150&1.88&1000&wea.&39046.66&0.001&41810.45&0.001&41821.65&0.41&7.11&0.03\\
80&150&1.88&1000&str.&39177.02&0.001&39269.65&0.001&39274.12&0.42&0.25&0.01\\
\hline 80&150&1.88&10000&unc.&540793.37&0.001&616671.25&0.001&616992.50&0.42&14.09&0.05\\
80&150&1.88&10000&wea.&393215.31&0.001&421058.15&0.001&421183.73&0.40&7.11&0.03\\
80&150&1.88&10000&str.&373428.06&0.001&374758.15&0.006&374764.16&0.38&0.36&0\\
\hline 160&300&1.88&100&unc.&10554.14&0.001&12015.2&0.001&12017.24&1.45&13.86&0.02\\
160&300&1.88&100&wea.&8454.72&0.001&8991.25&0.001&8991.63&1.29&6.35&0\\
160&300&1.88&100&str.&10250.96&0.001&10287.5&0.001&10292.20&0.47&0.40&0.05\\
\hline 160&300&1.88&1000&unc.&104327.69&0.001&121290.7&0.001&121314.58&1.35&16.28&0.02\\
160&300&1.88&1000&wea.&79150.28&0.001&84439.7&0.001&84446.18&1.21&6.69&0.01\\
160&300&1.88&1000&str.&78727.26&0.001&78812.35&0.002&78817.35&1.34&0.11&0.01\\
\hline 160&300&1.88&10000&unc.&1058583.33&0.001&1218847.9&0.001&1219050.39&1.32&15.16&0.02\\
160&300&1.88&10000&wea.&774145.57&0.001&828211.75&0.001&828277.84&1.25&6.99&0.01\\
160&300&1.88&10000&str.&762157.28&0.001&762215.8&0.010&762220.62&1.34&0.01&0\\
\hline 350&600&1.71&100&unc.&20205.29&0.002&23895.3&0.001&23896.22&2.44&18.27&0\\
350&600&1.71&100&wea.&16684.80&0.002&17988.7&0.001&17988.96&1.04&7.82&0\\
350&600&1.71&100&str.&20340.81&0.001&20546.45&0.001&20551.09&0.95&1.03&0.02\\
\hline 350&600&1.71&1000&unc.&204882.70&0.002&243150.35&0.001&243163.82&3.02&18.68&0.01\\
350&600&1.71&1000&wea.&153559.08&0.002&166036.75&0.001&166040.90&2.37&8.13&0\\
350&600&1.71&1000&str.&153823.12&0.002&155697.55&0.004&155703.05&2.73&1.22&0\\
\hline 350&600&1.71&10000&unc.&2078107.61&0.002&2431168.2&0.001&2431307.17&3.10&17.00&0.01\\
350&600&1.71&10000&wea.&1549021.57&0.002&1671662.7&0.001&1671701.79&2.75&7.92&0\\
350&600&1.71&10000&str.&1492937.22&0.002&1498734.55&0.024&1498739.88&3.39&0.39&0\\
\hline 700&1200&1.71&100&unc.&39952.73&0.004&47787.55&0.002&47788.20&5.94&19.61&0\\
700&1200&1.71&100&wea.&33465.43&0.004&36088.2&0.002&36088.41&3.43&7.84&0\\
700&1200&1.71&100&str.&40638.11&0.002&41058.55&0.002&41062.78&3.28&1.05&0.01\\
\hline 700&1200&1.71&1000&unc.&412982.36&0.004&486229&0.001&486237.68&15.39&17.74&0\\
700&1200&1.71&1000&wea.&310003.60&0.005&332949.4&0.001&332951.46&8.44&7.40&0\\
700&1200&1.71&1000&str.&309706.65&0.004&311753.75&0.007&311758.68&4.89&0.66&0\\
\hline 700&1200&1.71&10000&unc.&4121581.48&0.005&4868058.4&$<$ 0.001&4868134.03&11.72&18.11&0\\
700&1200&1.71&10000&wea.&3082395.30&0.005&3306136&0.001&3306159.16&12.58&7.26&0\\
700&1200&1.71&10000&str.&2998861.93&0.004&3002797.35&0.044&3002802.23&13.68&0.13&0\\
\hline 1400&2500&1.79&100&unc.&86986.84&0.008&99836.7&$<$ 0.001&99837.26&28.54&14.77&0\\
1400&2500&1.79&100&wea.&70725.36&0.008&75211.85&$<$ 0.001&75211.85&27.75&6.34&0\\
1400&2500&1.79&100&str.&85257.79&0.004&85643.85&0.003&85648.42&30.96&0.46&0.01\\
\hline 1400&2500&1.79&1000&unc.&873657.01&0.010&1017653.7&0.001&1017658.21&97.50&16.48&0\\
1400&2500&1.79&1000&wea.&649399.51&0.010&696410.4&0.001&696411.44&47.01&7.24&0\\
1400&2500&1.79&1000&str.&644382.36&0.007&644953.6&0.012&644958.14&22.67&0.09&0\\
\hline 1400&2500&1.79&10000&unc.&8678526.05&0.010&10144737.8&$<$ 0.001&10144780.06&78.01&16.90&0\\
1400&2500&1.79&10000&wea.&6424109.63&0.010&6892621.15&0.002&6892633.80&130.81&7.29&0\\
1400&2500&1.79&10000&str.&6264418.04&0.009&6265005.15&0.090&6265009.82&105.11&0.01&0\\
\hline Av& & && &1003482.23&0.003&1095098.13&0.005&1095125.61&15.22&8.00&0.01\\
\hline
\end{tabular}
\caption{Results on  instances of $Set_6$ first part.} \label{tab:Set6-1}
\end{table}

\begin{table}
\centering
\scriptsize
\begin{tabular}{|c|c|c|c|c||c|c|c|c|}\hline

$m$&$n$ &$n/m$&$R$& Corr. & $z_{seq}$ & $t_{seq}$ &  $z_{surr}$ & $t_{surr}$ \\
\hline
2800&5000&1.79&100&unc.&174131.97&0.014&199976.5&$<$0.001\\
2800&5000&1.79&100&wea.&141397.62&0.012&150448.6&$<$0.001\\
2800&5000&1.79&100&str.&170593.25&0.008&171362.45&0.004\\
\hline 2800&5000&1.79&1000&unc.&1733779.56&0.019&2026512.55&0.001\\
2800&5000&1.79&1000&wea.&1295325.84&0.019&1389545.5&0.001\\
2800&5000&1.79&1000&str.&1295059.09&0.012&1296189.75&0.026\\
\hline 2800&5000&1.79&10000&unc.&17322644.64&0.023&20273817.3&0.001\\
2800&5000&1.79&10000&wea.&12852362.34&0.023&13799455.7&0.002\\
2800&5000&1.79&10000&str.&12524179.73&0.020&12525355.05&0.210\\
\hline 5600&10000&1.79&100&unc.&349151.19&0.030&400784.55&0.001\\
5600&10000&1.79&100&wea.&283165.76&0.024&301331.85&0.001\\
5600&10000&1.79&100&str.&341534.04&0.015&342973.9&0.008\\
\hline 5600&10000&1.79&1000&unc.&3469091.22&0.041&4054885.65&0.001\\
5600&10000&1.79&1000&wea.&2600634.72&0.040&2788691.8&0.001\\
5600&10000&1.79&1000&str.&2589502.98&0.024&2591776.15&0.051\\
\hline 5600&10000&1.79&10000&unc.&34714138.04&0.050&40650097.05&0.001\\
5600&10000&1.79&10000&wea.&25707763.86&0.051&27594278.75&0.002\\
5600&10000&1.79&10000&str.&25066973.99&0.038&25069333.35&0.298\\
\hline 13000&20000&1.54&100&unc.&669426.91&0.055&802070.55&0.001\\
13000&20000&1.54&100&wea.&561260.53&0.043&602619.65&0.001\\
13000&20000&1.54&100&str.&678745.71&0.031&686666.7&0.017\\
\hline 13000&20000&1.54&1000&unc.&6541607.46&0.086&8114919.4&0.002\\
13000&20000&1.54&1000&wea.&4999547.44&0.085&5567798.15&0.002\\
13000&20000&1.54&1000&str.&5021434.80&0.045&5191444.65&0.100\\
\hline 13000&20000&1.54&10000&unc.&65419254.77&0.104&81302573.65&0.002\\
13000&20000&1.54&10000&wea.&49493710.84&0.108&55209028.55&0.002\\
13000&20000&1.54&10000&str.&48547914.48&0.071&50198100.65&0.980\\
\hline 30000&45000&1.50&100&unc.&1488357.61&0.116&1804547&0.002\\
30000&45000&1.50&100&wea.&1253916.51&0.092&1355354.6&0.001\\
30000&45000&1.50&100&str.&1525743.16&0.071&1544526.25&0.029\\
\hline 30000&45000&1.50&1000&unc.&14193039.47&0.220&18258034.65&0.002\\
30000&45000&1.50&1000&wea.&10746949.94&0.203&12527337.45&0.002\\
30000&45000&1.50&1000&str.&10738679.17&0.105&11672608&0.294\\
\hline 30000&45000&1.50&10000&unc.&144512754.32&0.261&182724949.3&0.003\\
30000&45000&1.50&10000&wea.&108538518.40&0.266&124095787.9&0.003\\
30000&45000&1.50&10000&str.&105180360.39&0.155&112885427.5&3.551\\
\hline Av& & && &20076184.77&0.07&23060294.75&0.16\\
\hline
\end{tabular}
\caption{Results on  instances of $Set_6$ second part.} \label{tab:Set6-2}
\end{table}

\medskip\medskip
As an example, in order to asses whether the sequential bound can  be effectively employed in  the solution of multiple knapsack problems,  three new MKP instances have been considered, denoted as {\em Inst1}, {\em Inst2} and {\em Inst3}. {\em Inst1} is the MKP instance with $n=36$ and $m=30$ reported in Table \ref{tab:example2}.
For this instance the optimal solution value  $z_{MKP}=2000$, $z_{seq}=2033.31$  $z_{surr}=2103$ and $z_{LP}=2117.19$. 
 {\em Inst2} is generated by making three copies of each item and each knapsack of {\em Inst1}, and {\em Inst3} is the instance containing  6 copies of each item and each knapsack of {\em Inst1}. Hence, we have $n= 36\times 3= 108$ and $m=30\times 3=90$  in {\em Inst2}, and $n=36\times 6=216$ and $m=30\times 6=180$  in {\em Inst3}. 
 Furtheremore, we have $z_{MKP}=6000$, $z_{seq}=6099.92$  $z_{surr}=6350$ and $z_{LP}=6351.56$ in {\em Inst2}, and $z_{MKP}=12000$, $z_{seq}=12199.85$  $z_{surr}=12700$ and $z_{LP}=12703.12$ in {\em Inst3}. 
The three instances have been solved by Gurobi  both by the standard  formulation  \eqref{ilp:obj}--\eqref{ilp:bin} and on a {\em modified formulation}  obtained by simply adding to the standard formulation   the following valid cut
\begin{equation}\label{eq:sequb}
\sum\limits_{i=1}^{m}\sum\limits_{j=1}^{n} p_jx_{ij} \leq \lfloor z_{seq} \rfloor,\end{equation}
 where $z_{seq}$ is the  sequential bound obtained by Algorithm \ref{fig:algo}. 
 
In Table \ref{tab:res_example2},  the branch and bound nodes and the computational times required by Gurobi  for solving the three instances both by the formulation \eqref{ilp:obj}--\eqref{ilp:bin} and by the modified formulation \eqref{ilp:obj}--\eqref{ilp:bin}+ \eqref{eq:sequb} are reported. Note that,  the addition of Constraint \eqref{eq:sequb} allows a faster solution of  {\em Inst2} and {\em Inst3}, requiring in all the cases a  smaller number of branch and bound nodes.  

\begin{table}
\centering
\scriptsize
\begin{tabular}{|c||c|c|c|c|c|c|c|c|c|c|c|c|c|c|c|c|c|c|c|c|c|}
\hline w&33&35&37&47&64&30&35&36&39&39&40&41&33&35&37&47&64&33&35&37&47\\
&64&30&35&36&39&39&40&41&33&35&37&47&64&47&64&&&&&&\\
\hline
p&99&70&74&47&64&50&50&39&39&39&38&37&99&70&74&47&64&99&70&74&47\\
&64&50&50&39&39&39&38&37&99&70&74&47&64&100&50&&&&&&\\
\hline
c &47&64&40&64&47&64&40&64&47&64&40&64&40&64&40&64&40&64&47&64&40\\
&39&39&37&39&39&37&39&39&37&&&&&&&&&&&&\\
\hline
\end{tabular}
\caption{ The MKP instance {\em Inst1}.} \label{tab:example2}
\end{table}

\begin{table}
\centering
\scriptsize
\begin{tabular}{|c||cc|cc|}
\hline Instance&\multicolumn{2}{c|}{PLI \eqref{ilp:obj}--\eqref{ilp:bin}}&\multicolumn{2}{|c|}{PLI \eqref{ilp:obj}--\eqref{ilp:bin}+\eqref{eq:seq}}\\
\hline &\# BB nodes&time (sec.)&\# BB nodes&time (sec.)\\
\hline {\em Inst1}& 511&1.48 &9 & 1.53 \\
\hline {\em Inst2}& 1082&4.64 &39 & 3.25 \\
\hline {\em Inst3}& 1784.0&26.98 &31 & 10.61 \\
\hline
\end{tabular}
\caption{ Gurobi results.} \label{tab:res_example2}
\end{table}

\section{Conclusions}
In this paper, a new technique for computing upper bounds for MKP is proposed, based on the idea of relaxing MKP to a Bounded Sequential  Multiple Knapsack Problem. The sequential upper bound turns out to be not worse than the linear relaxation of the standard formulation. Computational results on benchmark instances from the literature shows that the sequential upper bound can be computed by small a computational effort, and outperforms the bound produced by the surrogate relaxation  when the ratio $n/m$ is smaller than or equal to 3 and weights and profits are uncorrelated or weakly correlated. On the other hand, for bigger $n/m$ ratios or strongly correlated instances, the surrogate bound  is better than the sequential bound. Future research includes: $(i)$ the designing of exact solution schemes for MKP embedding the sequential  relaxation; $(ii)$ investigating whether the sequential  relaxation can be applied to other optimization problems.


\begin{thebibliography}{9}

 \bibitem{Balbal}
 Balbal, S.  Local Search Heuristic for Multiple Knapsack Problem. {\em International Journal of Intelligent Information Systems}. 4, 35, 2015 
 
 
\bibitem{DettiIPL}
Detti, P.  A polynomial algorithm for the multiple knapsack problem with divisible item sizes, {\em Information Processing Letters}, 109 (11),  582--584, 2009.

\bibitem{DettiIPoly}
Detti, P.  On the Sequential Multiknapsack polytope, arXiv 1406.3131, 2014.

\bibitem{Chekuri}
Chekuri, C., and  Khanna, S. A PTAS for the multiple knapsack problem. {\em In Proceedings of the 11th annual ACM-SIAM symposium on discrete algorithms}, 213--222, 2000.

\bibitem{Dell'Amico}
Dell’Amico, M.,  Delorme, M., Iori, M.,  Martello, S.
Mathematical models and decomposition methods for the multiple knapsack problem,
{\em European Journal of Operational Research},
274 (3), 886--899, 2019


\bibitem{Fukunaga11}Fukunaga, A. A branch-and-bound algorithm for hard multiple knapsack problems. {\em Annals of Operations Research}, 184(1), 97--119, 2011.

\bibitem{Fukunaga07}
Fukunaga, A.,  Korf, R. Bin-completion algorithms for multicontainer packing, knapsack, and covering problems. {\em Journal of Artificial Intelligence Research}, 28, 393--429, 2007.


\bibitem{HartOlm}
 Hartmann, M.,  Olmstead, T., Solving sequential knapsack problems,
{\em Operations Research Letters}, Vol. 13, 4 (1993) 225--232.

\bibitem{Kataoka}
 Kataoka, S.,  Yamada, T., Upper and lower bounding procedures for the multiple knapsack assignment problem,
{\em European Journal of Operational Research},
237,  2,440--447, 2014


\bibitem{Kellerer}
Kellerer, H., Pisinger, D., and Pferschy, U. Knapsack problems. Berlin: Springer, 2004.

\bibitem{Jansen}
 Jansen, K. Parameterized Approximation Scheme for the Multiple Knapsack Problem, {\em Proceedings of the Annual ACM-SIAM Symposium on Discrete Algorithms}, 665--674, 2009.

\bibitem{Lalami}
  Lalami M.E., Elkihel M., Baz D.E., Boyer V. A procedure-based heuristic for 0–1
multiple knapsack problems, {\em Int J Math Oper Res}, 4, 214--24, 2012.


\bibitem{Marcotte}
Marcotte, O., The cutting stock problem and integer rounding, {\em
Mathematical Programming}, 33, 82--92, 1985.

\bibitem{MT1981}
Martello, S. and Toth, P. A bound and bound algorithm for the zero-one multiple knapsack problem, {\em Discrete Applied Mathematics}, 3(4), 275--288, 1981.

\bibitem{MT1990}
Martello, S. and Toth, P. Knapsack Problems: Algorithms and Computer Implementations. Chichester: John Wiley \& Sons, 1990.



\bibitem{Pis99}
Pisinger, D. An exact algorithm for large multiple knapsack problems, {\em European Journal of Operational Research}, 114(3), 528--541, 1999.


\bibitem{Pochet98onthe}
 Pochet, Y.,  Weismantel, R. The sequential knapsack polytope, {\em
SIAM Journal on Optimization}, 8, 248--264, 1998.

\bibitem{WolPoc}
 Pochet  Y.,  Wolsey, L. A. Integer knapsack and flow covers with divisible coefficients: polyhedra, optimization and separation,
{\em Discrete Applied Mathematics}, 59, 57--74, 1995.

\bibitem{ver_aarts}
 Verhaegh W.F.J.,  Aarts, E.H.L. A polynomial-time algorithm for
knapsack with divisible item sizes, {\em Information Processing
Letters}, 62, 217--221, 1997.



%
\end{thebibliography}
\end{document}